\theoremstyle{plain}
\newtheorem{theorem}{Theorem}
\newtheorem{lemma}[theorem]{Lemma}
\newtheorem{proposition}[theorem]{Proposition}
\theoremstyle{definition}
\newtheorem{definition}[theorem]{Definition}
\newtheorem{remark}[theorem]{Remark}
\DeclareMathOperator{\act}{\alpha}
\DeclareMathOperator{\Aut}{Aut}
\DeclareMathOperator{\Stab}{Stab}
\DeclareMathOperator{\Sym}{Sym}
\DeclareMathOperator{\dimh}{\dim_{\mathcal{H}}}
\begin{document}

\title{Finitely constrained groups of maximal Hausdorff dimension}

\author{Andrew Penland}
\address{Dept. Of Mathematics, Texas A\&M University, College Station, TX 77843-3368, USA}
\email{apenland@math.tamu.edu}

\author{Zoran \v{S}uni\'c}
\email{sunic@math.tamu.edu}
\thanks{This material is based upon work supported by the National Science
Foundation under Grant No. DMS-1105520. }

\begin{abstract}
We prove that if $G_P$ is a finitely constrained group of binary rooted tree automorphisms (a group binary tree subshift of finite type) defined by an essential pattern group $P$ of pattern size $d$, $d \geq 2$,  and if $G_P$ has maximal Hausdorff dimension (equal to $1-1/2^{d-1}$), then $G_P$ is not topologically finitely generated. We describe precisely all essential pattern groups $P$ that yield finitely constrained groups with maximal Haudorff dimension. For a given size $d$, $d \geq 2$, there are exactly $2^{d-1}$ such pattern groups and they are all maximal in the group of automorphisms of the finite rooted regular tree of depth $d$.
\end{abstract}

\keywords{finitely constrained groups, groups acting on trees, symbolic dynamics on trees, group tree shifts, Hausdorff dimension}
\subjclass[2010]{20E08, 37B10}

\maketitle


\section{Introduction and main results}

Finitely constrained groups were introduced by Grigorchuk in 2005~\cite{grigorchuk:unsolved}. They are compact groups of rooted tree automorphisms that may be defined by finitely many forbidden patterns on the tree, i.e., they are groups of rooted tree automorphisms that are also tree subshifts of finite type (this is why they are sometimes called groups of finite type, the term that Grigorchuk used originally).

Our goal is to prove the following results on finitely constrained groups of binary tree automorphisms.

\begin{theorem}\label{t:main-equiv}
Let $G_P$ be a finitely constrained group of binary rooted tree automorphisms (a group binary tree subshift of finite type) defined by an essential pattern group $P$ of pattern size $d$, $d \geq 2$. The following conditions are equivalent.
\begin{enumerate}
\item $G_P$ has maximal Hausdorff dimension (equal to $1-1/2^{d-1}$).

\item $P$ is a proper subgroup of the group $G(d)$ of automorphisms of the binary rooted tree of depth $d$ that contains the commutator subgroup of $G(d)$.

\item $P$ is a maximal subgroup of $G(d)$ that does not contain the generator $a_{d-1}$.
\end{enumerate}
\end{theorem}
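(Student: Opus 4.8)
The plan is to convert all three conditions into linear algebra on the abelianization $G(d)/[G(d),G(d)]\cong C_2^d$, after first turning the analytic condition (i) into an arithmetic one by means of a Hausdorff dimension formula. I fix notation: let $p\colon G(d)\to G(d-1)$ be the truncation forgetting the bottom level, with kernel $N:=\ker p$ the elementary abelian group of labelings supported on the $2^{d-1}$ vertices of level $d-1$ (so $|N|=2^{2^{d-1}}$), and put $K:=P\cap N$. Let $\beta\colon G(d)\to C_2^d$ be the level-parity map $g\mapsto(\pi_0(g),\dots,\pi_{d-1}(g))$ and $\beta_{d-1}$ its analogue on $G(d-1)$, so that $\ker\beta=[G(d),G(d)]$, $\beta(a_{d-1})=e_{d-1}$, $\beta(N)=\langle e_{d-1}\rangle$, and $\beta_{d-1}\circ p=p_*\circ\beta$, where $p_*\colon C_2^d\to C_2^{d-1}$ deletes the last coordinate. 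I will use essentiality of $P$ in the two forms $p(P)=G(d-1)$ (every depth-$(d-1)$ pattern extends, and $G_{P,d}=P$) and $N\not\le P$ (the size-$d$ constraint is genuine).

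The first step is the counting formula $\dimh(G_P)=\log_2|K|/2^{d-1}$. I would prove it by computing the finite quotients $G_{P,n}$ for $n\ge d$ through level-by-level extension: passing from depth $n$ to depth $n+1$ introduces exactly the $2^{n-d+1}$ new depth-$d$ patterns rooted at level $n-d+1$, each with its top $d-1$ levels already fixed and its bottom level free to be chosen so that the whole pattern lands in $P$. Since $p(P)=G(d-1)$, every such top part is admissible and its set of legal bottom completions is a nonempty coset of $K$, of size $|K|$; hence $|G_{P,n+1}|=|G_{P,n}|\,|K|^{2^{n-d+1}}$, and dividing $\log_2|G_{P,n}|$ by $\log_2|G(n)|=2^n-1$ gives the stated value in the limit. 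Because essentiality forces $K$ to be a proper subgroup of $N$, we get $|K|\le 2^{2^{d-1}-1}$ and therefore $\dimh(G_P)\le 1-1/2^{d-1}$, so the asserted value is indeed the maximum and condition (i) is precisely the equality $|K|=2^{2^{d-1}-1}$.

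It then remains to run the cycle (iii)$\Rightarrow$(i)$\Rightarrow$(ii)$\Rightarrow$(iii). For (iii)$\Rightarrow$(i): a maximal subgroup of the finite $2$-group $G(d)$ has index $2$ and contains $[G(d),G(d)]$, so $V:=\beta(P)$ is a hyperplane; $a_{d-1}\notin P$ means $e_{d-1}\notin V$, whence $\beta(K)\subseteq V\cap\langle e_{d-1}\rangle=0$, i.e. $K\le[G(d),G(d)]\cap N$, while $[G(d),G(d)]\le P$ gives the reverse inclusion, so $K=[G(d),G(d)]\cap N$, of order $2^{2^{d-1}-1}$. For (i)$\Rightarrow$(ii): from $|K|=2^{2^{d-1}-1}$ and $|p(P)|=|G(d-1)|=2^{2^{d-1}-1}$ I get $|P|=|p(P)|\,|K|=2^{2^{d}-2}$, so $P$ has index $2$, hence is proper and contains the commutator subgroup. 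For (ii)$\Rightarrow$(iii): writing $P=\beta^{-1}(V)$ for a proper $V$, essentiality gives $p_*(V)=\beta_{d-1}(p(P))=C_2^{d-1}$, forcing $\dim V\ge d-1$, while properness forces $\dim V\le d-1$; thus $V$ is a hyperplane (so $P$ is maximal) and $p_*|_V$ is injective, giving $V\cap\langle e_{d-1}\rangle=0$, i.e. $a_{d-1}\notin P$.

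I expect the dimension formula to be the main obstacle—specifically, justifying that each member of $G_{P,n}$ really extends to an element of $G_P$ and that the number of admissible bottom completions is constant and equal to $|K|$. This is exactly where $p(P)=G(d-1)$ is indispensable: without it some top parts become inadmissible, the extension count collapses, and the formula fails (for instance $P=[G(d),G(d)]$ would be spuriously assigned dimension $1-1/2^{d-1}$ even though $G_P$ is trivial). The remaining points are routine bookkeeping: checking $G_{P,d}=P$, confirming that the wreath-product twisting of the bottom coordinates does not change the coset count, and matching the final enumeration—the maximal subgroups avoiding $a_{d-1}$ are the kernels of the homomorphisms $\phi\colon C_2^d\to C_2$ with $\phi(e_{d-1})=1$, of which there are exactly $2^{d-1}$, in agreement with the stated classification of the extremal pattern groups.
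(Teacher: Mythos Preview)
Your overall architecture is reasonable and your implication (iii)$\Rightarrow$(i) is correct, but the proof has a genuine gap: the assertion that essentiality of $P$ implies $p(P)=G(d-1)$ is \emph{false}. A concrete counterexample is $P=\langle a_0\rangle\le G(d)$ for any $d\ge 3$: the sections at $0$ and $1$ of either element of $P$ are trivial, hence extend to the identity in $P$, so $P$ is essential; yet $p(P)=\langle a_0\rangle\subsetneq G(d-1)$. (Essentiality only says that sections of elements of $P$ lie in $p(P)$, not that $p(P)$ is everything.) Similarly, essentiality does not force $N\not\le P$; take $P=G(d)$.

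This breaks both (i)$\Rightarrow$(ii) and (ii)$\Rightarrow$(iii) in your cycle. In (i)$\Rightarrow$(ii) you compute $|P|=|p(P)|\cdot|K|$ and then need $|p(P)|=|G(d-1)|$ to reach index $2$; without it you only get $[G(d):P]\ge 2$. In (ii)$\Rightarrow$(iii) you need $p_*(V)=C_2^{d-1}$, which again is equivalent to $p(P)=G(d-1)$. The paper does establish $p(P)=G(d-1)$, but as a \emph{consequence} of (i), not of essentiality alone: one first uses the dimension formula to get $[N:K]=2$, then invokes level-transitivity of $G_P$ (Proposition~\ref{p:transitivity}, available because the dimension is positive) to pin down $K=M_{X^{d-1}}$ as the unique $P$-invariant index-$2$ subgroup of $N$, and only then runs an extend--restrict construction (Figure~\ref{f:extend-restrict}) to show every element of $G(d-1)$ lifts to $P$. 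For the remaining link the paper argues (ii)$\Rightarrow$(i) directly, without ever touching $p(P)$: from $[G(d),G(d)]\le P$ one gets $(a_{d-1})^g a_{d-1}\in P_{d-1}$ for every $g\in G(d)$, and these elements generate $M_{X^{d-1}}$, forcing $|K|\ge 2^{2^{d-1}-1}$.

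Two minor remarks. Your derivation of the dimension formula also invokes $p(P)=G(d-1)$, but it does not need it: what makes each top part admissible is precisely essentiality (the top $d-1$ levels of the new pattern at $v$ are a section of the pattern at the parent of $v$, hence lie in $p(P)$), after which the bottom completions form a coset of $K$. And your computation $K=[G(d),G(d)]\cap N=M_{X^{d-1}}$ in (iii)$\Rightarrow$(i) is clean and correct; that direction stands.
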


\begin{theorem}\label{t:main}
Let $G_P$ be a finitely constrained group of binary rooted tree automorphisms (a group binary tree subshift of finite type) defined by an essential pattern group $P$ of pattern size $d$, $d \geq 2$. If $G_P$ has maximal Hausdorff dimension (equal to $1-1/2^{d-1}$), then $G_P$ is not topologically finitely generated.
\end{theorem}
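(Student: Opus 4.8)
The plan is to exploit that $\Aut(T)$ is the inverse limit of the finite $2$-groups $G(n)=\Aut(T_n)$ and is therefore pro-$2$; as a closed subgroup $G_P$ is pro-$2$ as well. For a pro-$2$ group $G$ the Frattini argument shows that $G$ is topologically finitely generated exactly when $d(G):=\dim_{\mathbf F_2}H^1(G,\mathbf F_2)=\dim_{\mathbf F_2}\mathrm{Hom}_{\mathrm{cont}}(G,\mathbf F_2)$ is finite. Writing $G_P=\varprojlim G_P^{(n)}$, with $G_P^{(n)}$ the image of $G_P$ in $G(n)$, this equals $\sup_n d(G_P^{(n)})$, so it suffices to prove $d(G_P^{(n)})\to\infty$, i.e.\ that $G_P$ admits infinitely many linearly independent continuous homomorphisms to $\mathbf F_2$.

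Two structural inputs drive the construction. First, $G_P$ is self-similar: because it is finitely constrained, every section $g|_v$ of $g\in G_P$ again lies in $G_P$, and sections obey $(gh)|_v=g|_{h(v)}\,h|_v$. Hence for any continuous homomorphism $\theta\colon G_P\to\mathbf F_2$ and any level $\ell$ the averaged map $\Theta_\ell[\theta]\colon g\mapsto\sum_{|v|=\ell}\theta(g|_v)$ is again a continuous homomorphism, the cross term reindexing to $\sum_{|u|=\ell}\theta(g|_u)$ because $h$ permutes the level-$\ell$ vertices. Second, cutting $G_P^{(n)}$ at the last level exhibits it as a one-relation subgroup of the permutational wreath product $G_P^{(n-1)}\wr C_2$: an element is $(g_0,g_1)\epsilon$ with $g_0,g_1\in G_P^{(n-1)}$ subject to the single linear constraint coming from the root pattern. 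Writing the functional defining $P$ as $\phi=\sum_{i\in S}\pi_i$, where $\pi_i$ is the level-$i$ activity (parity of transpositions at level $i$) inside the depth-$d$ window and $\sigma_\ell$ is the analogous level activity on $T$, Theorem~\ref{t:main-equiv} (using $[G(d),G(d)]\le P$) shows the constraint reads $\phi'(g_0)+\phi'(g_1)=\epsilon$ when $0\in S$ and $\phi'(g_0)+\phi'(g_1)=0$ (with $\epsilon$ free) when $0\notin S$, where $\phi'=\sum_{j\in S'}\pi_j$, $S'=\{\,i-1:i\in S,\ i\ge1\,\}$, is the downward shift. The condition $a_{d-1}\notin P$ of Theorem~\ref{t:main-equiv}(iii) is exactly $d-1\in S$, so $\phi'$ is itself nondegenerate of the same type ($d-2\in S'$) and the recursion sustains itself.

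Applying the Lyndon--Hochschild--Serre five-term sequence to the layer extension $1\to L_n\to G_P^{(n)}\to G_P^{(n-1)}\to1$, where $L_n$ is the elementary abelian group of admissible transpositions at the deepest level $n-1$, gives
\[
d(G_P^{(n)})=d(G_P^{(n-1)})+\dim_{\mathbf F_2}(L_n)_{G_P^{(n-1)}}-\delta_n,\qquad \delta_n\le\dim_{\mathbf F_2}H_2(G_P^{(n-1)},\mathbf F_2),
\]
where $(L_n)_{G_P^{(n-1)}}$ are the coinvariants for the conjugation action and $\delta_n$ is the dimension of the image of the transgression $H_2(G_P^{(n-1)},\mathbf F_2)\to(L_n)_{G_P^{(n-1)}}$. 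A coinvariant class is realized concretely by an averaged homomorphism of the type $\Theta_\ell[\theta]$ with $\theta$ a detector of the deepest layer, and $\delta_n$ measures whether such a class extends off $L_n$ to all of $G_P^{(n)}$. Thus $d(G_P^{(n)})\to\infty$ follows once one shows $\sum_n\big(\dim_{\mathbf F_2}(L_n)_{G_P^{(n-1)}}-\delta_n\big)=\infty$.

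The main obstacle is precisely this non-collapse/independence step: controlling $\delta_n$ and showing the coinvariants genuinely survive infinitely often. The naive seed $\theta=\sigma_0$ (root activity) is instructive but useless, since $\Theta_\ell[\sigma_0]=\sigma_\ell$ and the defining relations force the level activities $\sigma_\ell$ into finitely many classes, spanning only a finite-dimensional space. What rescues the argument is that, because $d-1\in S$, the constraint forces equalities among the transpositions in the bottom row of each window; choosing $\theta_0$ to record a partial bottom-row activity over a proper $G_P$-stable family of vertices (so that the symmetric differences produced by the $G_P$-action fall into the relation module and cancel) yields homomorphisms $\psi_\ell=\Theta_\ell[\theta_0]$ that are not absorbed by the $\sigma_\ell$. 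I would prove their independence by evaluating $\psi_\ell$ on explicit test automorphisms supported in a single depth-$d$ window below a chosen level-$\ell$ vertex, arranged so that $\psi_\ell$ detects the test element while $\psi_{\ell'}$ vanishes on it for $\ell'\ne\ell$. Here the maximality of $P$ (that $G_P$ realizes every window pattern compatible with the single constraint) is used twice: it guarantees such test elements lie in $G_P$, and it forces $\delta_n<\dim_{\mathbf F_2}(L_n)_{G_P^{(n-1)}}$ for infinitely many $n$, so that no unforeseen relation annihilates the $\psi_\ell$ and $H^1(G_P,\mathbf F_2)$ is infinite dimensional.
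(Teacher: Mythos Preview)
Your high-level strategy---show $\dim_{\mathbf F_2}\mathrm{Hom}_{\mathrm{cont}}(G_P,\mathbf F_2)=\infty$ by producing infinitely many independent characters---is sound in principle, but the proposal has a genuine gap at exactly the step you flag as the main obstacle. You propose building $\theta_0$ as a ``partial bottom-row activity over a proper $G_P$-stable family of vertices.'' No such family exists: by Proposition~\ref{p:transitivity} (and as used in the proof of Theorem~\ref{t:main-equiv}), $G_P$ acts transitively on every level, so the only $G_P$-invariant subsets of $X^n$ are $\emptyset$ and $X^n$ itself. The parenthetical hedge (``symmetric differences \dots\ fall into the relation module and cancel'') is not a construction; you never actually exhibit the $\psi_\ell$, and the claimed inequality $\delta_n<\dim_{\mathbf F_2}(L_n)_{G_P^{(n-1)}}$ for infinitely many $n$ is asserted rather than proved. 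Since $\dim H_2(G_P^{(n-1)},\mathbf F_2)$ grows with $n$, bounding the transgression image is real work that the proposal does not do.

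The paper's route is much shorter and avoids all of this. It invokes the Bondarenko--Samoilovych criterion (Theorem~\ref{t:bs-condition}): if $P_{d-1}\not\subseteq[P,P]$ then $G_P$ is not topologically finitely generated. This reduces the infinite problem to a finite one about $P\le G(d)$. The paper then exhibits a specific element $[a_0,a_{d-1}]\in P_{d-1}\setminus[P,P]$. The witness that $[a_0,a_{d-1}]\notin[P,P]$ is a pair of \emph{non-homomorphic} functions $N_0,N_1$ recording the $J'$-activity parity on the left and right halves of the tree; they satisfy a twisted cocycle rule $N_i(gh)=N_i(h)+N_{i+\alpha_0(h)}(g)$, which is enough to force $N_0=N_1=0$ on every commutator (and hence on $[P,P]$), while $N_0([a_0,a_{d-1}])=1$. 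Note that the paper's $N_i$ are partial activities over the left/right subtrees, which are \emph{not} $G_P$-stable---the point is that one does not need them to be homomorphisms on $P$, only to vanish on $[P,P]$. That is precisely the trick your proposal is missing.
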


Note that Theorem~\ref{t:main} is already known for sizes $d=2$, $d=3$, and $d=4$. The case $d=2$ is subsumed in the earlier results of the second author~\cite{sunic:pattern}, and the cases $d=3$ and $d=4$ are subsumed in the results of Bondarenko and Samoilovych~\cite{bondarenko-s:size3}.

It follows from~\cite[Proposition~2.7]{bartholdi:branch-rings} (and independently from~\cite[Proposition~6]{sunic:hausdorff}) that the possible values of the Hausdorff dimension of finitely constrained groups of binary tree automorphisms defined by pattern groups of pattern size $d$ are limited to the set
\begin{equation}\label{e:possible}
\{1,~1-\frac{1}{2^{d-1}},~1-\frac{2}{2^{d-1}},~\dots~,~ \frac{1}{2^{d-1}},~0\}
\end{equation}
and that the value 0 is attained only for finite groups. The value 1 is attained only for the entire group $\Aut(X^*)$ of automorphisms of the binary tree (this group is finitely constrained as it is defined by allowing all patterns). Thus, the situation is clear for the two extreme values of the Hausdorff dimension, 0 and 1, and Theorem~\ref{t:main-equiv} and Theorem~\ref{t:main} address the groups of maximal Hausdorff dimension different than 1, dimension $1-1/2^{d-1}$.

The bound in Theorem ~\ref{t:main} is the best possible, as the first author has recently discovered an example of a topologically finitely generated, finitely constrained group defined by patterns of size $d = 5$ and having Hausdorff dimension $14/16 = 1 - 2/2^{d-1}$. Previously, Grigorchuk proved that the topological closure of the first Grigorchuk group has Hausdorff dimension $5/8=1-3/2^{4-1}$~\cite{grigorchuk:jibg} and it is a finitely constrained group of binary tree automorphisms defined by a pattern group $P$ of pattern size $4$~\cite{grigorchuk:unsolved}. Moreover, for every $d \geq 4$, the second author constructed finitely constrained groups of binary tree automorphisms defined by pattern groups of pattern size $d$ that are topologically finitely generated and have Hausdorff dimension equal to $1-3/2^{d-1}$~\cite{sunic:hausdorff}. 

While a finitely constrained group, other than the group of all tree automorphisms $\Aut(X^*)$, cannot have Hausdorff dimension 1, Ab\'{e}rt and Vir\'{a}g~\cite{abert-v:dimension} showed that, with probability 1, three random binary tree automorphisms generate a subgroup whose closure has Hausdorff dimension 1. Siegenthaler~\cite{siegenthaler:irrational} constructed the first explicit examples of topologically finitely generated groups with Hausdorff dimension equal to 1. The construction of Siegenthaler is based on combining together, in a single group, a sequence of spinal groups~\cite{bartholdi-s:wpg} whose closures have higher and higher Hausdorff dimension (arbitrarily close to 1).

The paper has the following outline. Section~\ref{s:aut} and Section~\ref{s:hausdorff} provide the necessary background on groups of binary rooted tree automorphisms, finitely constrained groups and their Hausdorff dimension. We prove Theorem~\ref{t:main-equiv} in Section~\ref{s:maximal}. We then provide a property of the commutator $[P,P]$ for maximal subgroups $P$ of the group $G(d)$ of automorphisms of the binary rooted tree of depth $d$ (Section~\ref{s:commutator}, Proposition~\ref{p:no-adad}), which is then used, along with Theorem~\ref{t:main-equiv} and the condition of Bondarenko and Samoilovych (Theorem~\ref{t:bs-condition}), to prove Theorem~\ref{t:main} in Subsection~\ref{s:main-proof}.


\section{The groups $G=\Aut(X^*)$ and $G(d)=\Aut(X^{[d]})$}\label{s:aut}

We emphasize that all our considerations and claims are limited to the binary rooted tree case. even though most of the notions make sense and many (but not all!) results that we use or prove are valid on trees of higher arity, especially if one limits the considerations to the case of $p$-adic automorphisms of the $p$-ary rooted tree, for some prime $p$.

\subsection{The group $G=\Aut(X^*)$ of binary rooted tree automorphisms}

Let $X=\{0,1\}$. For $n \geq 0$, we denote by $X^n$ the set of words of length $n$ over $X$ (with the empty word $\emptyset$ having length 0), and we write $|w| = n$ if $w \in X^n$. Let $X^*$ be the set of all words over $X$. The set $X^*$ naturally has the structure of a binary rooted tree with the elements of $X^*$ as the vertices, edges given by $\{(w,wx)\}_{w \in X^*, x \in X}$ and $\emptyset$ as the root. Each vertex $w \in X^*$ has two children, $w0$ and $w1$.

The group $\Aut(X^*)$ consists of all automorphisms of the graph $X^*$ (such automorphisms necessarily preserve the root, the length of words, levels of the tree and the prefix relation on words). We denote $\Aut(X^*)$ by $G$ and write the action of $G$ on the vertices of $X^*$ as a left action. We identify the symmetric group $\Sym(X) = ((),(01))$ with the cyclic group $C_2 = \{0,1\}$.

The self-similarity of the tree $X^*$ leads naturally to the notion of self-similarity for subgroups of $G$. For any vertex $w \in X^*$, the subtree $wX^*$ can be viewed as a copy of $X^*$ rooted at $w$, so the groups $\Aut(wX^*)$ and $\Aut(X^*)$ are isomorphic. For $w \in X^*$ and $g \in \Aut(X^*)$, we define the \emph{section of g at w} to be the unique element $g_w \in \Aut(X^*)$ such that $g(wv)=g(w)g_w(v)$, for all $v \in X^*$. In other words, the action of $g_w$ on $X^*$ corresponds to the ``tail of the action'' of $g$ on $X^*$ behind the prefix $w$. For any word $w= x_1x_2\cdots x_n \in X^*$, the action of $g \in G$ on $w$ is expressed via its sections as \[ g(x_1\cdots x_n) = g_{\emptyset}(x_1)g_{x_1}(x_2)g_{x_1x_2}(x_3) \cdots g_{x_1x_2\cdots x_{n-1}}(x_n). \] The chain rule formula $(hg)_{u} = h_{g(u)}g_{u}$ and the inversion formula $(g^{-1})_{u} = (g_{g^{-1}(u)})^{-1}$ hold for any $h,g \in \Aut(X^*)$ and any $u \in X^*$.

Of particular interest are subgroups of $\Aut(X^*)$ which contain all sections of all their elements.

\begin{definition}
A subgroup $H$ of $\Aut(X^*)$ is called \emph{self-similar} if whenever $h \in H$ and $w \in X^*$, $h_w \in H$.
\end{definition}

For a vertex $v \in X^*$, the \emph{stabilizer of v} is defined as $\Stab_G(v) = \{g \in G \mid g(v) = v \}$. The \emph{level $n$ stabilizer} is denoted by $G_n$ and is equal to
\[
 G_n = \Stab_G(X^n) = \{\ g \in G \mid g(v) = v, \text{ for }v \in X^n\  \} =  \bigcap_{v \in X^n} \Stab_G(v).
 \]
Note that, for every $w \in X^*$, $\Stab_G(w)$ is isomorphic to $G$ via the map $h \rightarrow h_w$. If $H$ is a subgroup of $G$, the level $n$ stabilizer of $H$ is $H_n = \Stab_H(X^n) = G_n \cap H$.

The action of $g \in G$ on $X^*$ restricts to an action of $\Sym(X)$ on $X$, yielding a homomorphism $\alpha: G \rightarrow C_2$. For $g \in G$, we call $\alpha(g)$ the \emph{root action of g}, and define the \emph{activity of g at v} to be $\alpha_v(g) = \alpha(g_v)$. The activity of $g$ at $v$ is also called the \emph{label of g at v} and is sometimes denoted $g_{(v)}$.

\begin{definition}
For a finite set $J \subseteq \{0,1,\dots\}$ and an element $g \in G$, define the activity of $g$ within $J$ to be the sum, modulo 2, of the activities of $g$ on all vertices on levels from $J$, i.e.,
\[
 \alpha_J(g) = \sum_{j \in J} \sum_{v \in X^j} {g_{(v)}}.
\]
\end{definition}

Note that the map $\alpha_J: G \to C_2$ is a homomorphism.

\subsection{The group $G(d)=\Aut(X^{[d]})$ of finite binary rooted tree automorphisms} Let
\[
 X^{[d]} = \bigcup_{i=0}^d X^i \qquad \text{and} \qquad X^{(d)} = \bigcup_{i = 0}^{d-1} X^i .
\]
The set $X^{[d]}$ corresponds to the finite subtree of $X^*$ with $(d+1)$ levels rooted at $\emptyset$ such that every vertex (except for the leaves) has $|X|$ children. The group of automorphisms of the tree $X^{[d]}$ is denoted by $G(d)$. Note that $G(d)$ is isomorphic to the quotient $G/G_d$. The following properties of $G(d)$ are well known.

\begin{proposition}
For $i=0,\dots,d-1$, let $a_i$ be the automorphism in $G(d)$ such that  $(a_i)_{(w)}$ is nontrivial if and only if $w = 0^i$.
\begin{enumerate}
\item The finite group $G(d)$ is generated by the set $\{a_i\}_{i=0}^{d-1}$.

\item The group $G(d) = \underbrace{C_2 \wr C_2 \wr \dots \wr C_2}_d$ has a presentation
\[
 G(d) = \langle a_0,a_1,\dots,a_{d-1} \mid a_i^2,~\text{ for } 0\leq i \leq d-1,~  [a_j^{a_i},a_k],~\text{ for } 0 \leq i<j,k \leq d-1 \rangle.
\]

\item $G(d)/[G(d),G(d)]$ is an elementary abelian 2-group of rank $d$.
\end{enumerate}
\end{proposition}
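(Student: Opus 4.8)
These are standard structural facts about the iterated wreath product $G(d)=\Aut(X^{[d]})$, and I would organize the proof around the recursive decomposition
\[
 G(d)\;\cong\;\bigl(G(d-1)\times G(d-1)\bigr)\rtimes\langle a_0\rangle ,
\]
where the two factors act on the subtrees rooted at $0$ and $1$ and $a_0$ swaps them. Throughout I would use that $|G(d)|=2^{2^d-1}$, which follows because an automorphism of $X^{[d]}$ is freely and uniquely determined by a choice of label in $C_2$ at each of the $2^d-1$ vertices on levels $0,\dots,d-1$. For (i) I would induct on $d$: identifying the subtree rooted at $0$ with $X^{[d-1]}$ sends the generator $a_i$ (whose label sits at $0^i=0\cdot0^{i-1}$, so $i\ge1$) to the generator of $G(d-1)$ supported at $0^{i-1}$, whence by induction $\langle a_1,\dots,a_{d-1}\rangle$ is the left copy of $G(d-1)$; conjugating by $a_0$ produces the right copy, and together with $a_0$ these fill out the semidirect product above. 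The base case $d=1$ is $G(1)=C_2=\langle a_0\rangle$.

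For (iii), note first that the abelianization is generated by the images of the $d$ involutions $a_0,\dots,a_{d-1}$, hence is an elementary abelian $2$-group of rank at most $d$. For the reverse inequality I would use the level-activity homomorphisms: the map $g\mapsto(\alpha_{\{0\}}(g),\dots,\alpha_{\{d-1\}}(g))$ is a homomorphism $G(d)\to C_2^d$ (each coordinate is a homomorphism, as already noted), and since $\alpha_{\{j\}}(a_i)=\delta_{ij}$ it is surjective. This forces the rank to be exactly $d$.

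The substance is (ii), which I would prove by the standard present-then-count method. Let $\tilde G(d)$ be the abstractly presented group on $x_0,\dots,x_{d-1}$. First I would check that the $a_i$ satisfy the relations, so that $x_i\mapsto a_i$ defines a homomorphism $\tilde G(d)\to G(d)$, surjective by (i). The relations $a_i^2=1$ are clear. For the commutator relations it is cleanest to record that, writing $\tau_v$ for the elementary transposition with unique label at $v$, two such transpositions commute precisely when $v$ and $v'$ are equal or incomparable (neither a prefix of the other). Since conjugation transports supports, $a_j^{a_i}=\tau_{a_i(0^j)}=\tau_{0^i10^{\,j-i-1}}$ for $i<j$, and this commutes with $a_k=\tau_{0^k}$ exactly when $k>i$; this matches the stated index range $i<j,\,k$ (read as $i<j$ and $i<k$).

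It remains to prove $|\tilde G(d)|\le 2^{2^d-1}$, and this is the main obstacle. I would do it by induction, mirroring the recursive decomposition inside $\tilde G(d)$. Set $L=\langle x_1,\dots,x_{d-1}\rangle$. The reindexing $a_m\mapsto x_{m+1}$ carries the defining relations of the size-$(d-1)$ presentation into relations that hold in $\tilde G(d)$, so there is a surjection $\tilde G(d-1)\twoheadrightarrow L$ and, by the induction hypothesis, $|L|\le 2^{2^{d-1}-1}$. Taking $i=0$ in the commutator relations shows that each conjugate $x_j^{x_0}$ ($j\ge1$) commutes with every $x_k$ ($k\ge1$); hence $R:=x_0Lx_0$ commutes elementwise with $L$, so $LR$ is a subgroup of order at most $|L|^2$. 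Finally $x_0$ has order $2$ and conjugation by $x_0$ interchanges $L$ and $R$, so $LR$ is normalized by $x_0$ and $\tilde G(d)=\langle x_0\rangle\,LR$ has order at most $2\,|L|^2\le 2\cdot 2^{2(2^{d-1}-1)}=2^{2^d-1}$. Combined with the surjection $\tilde G(d)\twoheadrightarrow G(d)$ and $|G(d)|=2^{2^d-1}$, this yields the isomorphism. The delicate points are the correct reading of the index constraints in the relations and the verification that the size-$(d-1)$ relations genuinely pull back into $\tilde G(d)$ under the index shift.
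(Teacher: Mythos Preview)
Your argument is correct. The paper does not actually prove this proposition: it is introduced as a list of ``well known'' structural facts about $G(d)$ and no proof is given, so there is nothing to compare against. Your route---(i) by induction on the wreath decomposition $G(d)\cong (G(d-1)\times G(d-1))\rtimes\langle a_0\rangle$, (iii) via the surjection $(\alpha_{\{0\}},\dots,\alpha_{\{d-1\}}):G(d)\to C_2^{\,d}$, and (ii) by the von~Dyck surjection together with an inductive order bound built from $L=\langle x_1,\dots,x_{d-1}\rangle$, its $x_0$-conjugate $R$, and $\tilde G(d)=\langle x_0\rangle LR$---is the standard and efficient way to establish these facts. The one delicate point, which you handle correctly, is reading the relator constraint ``$0\le i<j,k\le d-1$'' as $i<j$ \emph{and} $i<k$: this is exactly what forces each $x_j^{x_0}$ to centralize $L$ and makes the order bound $|\tilde G(d)|\le 2|L|^2\le 2^{2^d-1}$ go through.
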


Most of the notions defined for $G$ naturally transfer to the group $G(d)$ of automorphisms of the finite tree $X^{[d]}$. For a word $w$ of length smaller than $d$ and $g \in G(d)$, we define the section $g_w$ to be the unique element of $G(d - |w|)$ such that $g(wv) = g(w)g_{w}(v)$, for all $v \in X^{([d-|w|])}$. The vertex labels and the activity of elements of $G(d)$ are defined exactly as they are for elements of $G$ (limited up to and including level $d-1$). The chain rule and the inversion formula are valid in this context too. For $n=0,\dots,d$, the stabilizer $\Stab_{G(d)}(X^n)$ of level $n$ is denoted by $G_n(d)$, and for any subgroup $P$ of $G(d)$, the stabilizer $\Stab_{P}(X^n)=G_n(d) \cap P$ is denoted by $P_n$.


\section{Hausdorff dimension and finitely constrained groups}\label{s:hausdorff}

\subsection{Metric on $G$ and Hausdorff dimension of closed subgroups of $G$}

Hausdorff dimension is a well-known concept from fractal geometry which can be defined for any metric space. In this section we will consider Hausdorff dimension only as it applies to self-similar groups of binary rooted tree automorphisms. Abercrombie~\cite{abercrombie:subgroups} was the first to consider the Hausdorff dimension of closed subgroups of a profinite group with respect to the natural profinite metric structure. Barnea and Shalev considered Hausdorff dimension in pro-$p$ groups and provided a formula which gives the Hausdorff dimension of a closed subgroup via a sequence of finite quotients~\cite[Theorem 2.4]{barnea-s:hausdorff} as
\begin{equation}\label{e:liminf-p}
\dim_{\mathcal{H}}(H) = \liminf_{n \rightarrow \infty} \frac{\log_2[H:H_n]}{\log_2[G:G_n]}.
\end{equation}

In general, different metrics lead to different Hausdorff dimension functions for a space. Thus, we need to be careful and spell out precisely the metric on $G=\Aut(X^*)$ for which~\eqref{e:liminf-p} correctly expresses the Hausdorff dimension. Using the notion of activity from the previous section, we can define the \emph{portrait map} $\phi: G \to (C_2)^{X^*}$ given by $\phi(g) = (\alpha_{v}(g))_{v \in X^*}$. The portrait map is bijective and therefore identifies $G$ with the compact space $(C_2)^{X^*}$. Via this identification, $G$ is equipped with a metric $d$ given by $d(g,h) = 0$ if $g = h$ and
 \[
d(g,h) = \frac{1}{[G:G_n]} \text { if } g \neq h \in G,
\]  where
\[ n = \inf \{\ k \mid \text{ there is } u \in X^k \text{ with } g_{(u)} \neq h_{(u)} \}.
 \]
Informally, two elements of $G$ are close in this metric if their actions are identical on a large subtree rooted at $\emptyset$. Since in the binary case $[G:G_n]=2^{2^n-1}$, we may rewrite the formula for Hausdorff dimension in the form
\begin{equation}\label{e:liminf}
\dim_{\mathcal{H}}(H) = \liminf_{n \rightarrow \infty} \frac{\log_2 [H:H_n]}{2^n-1},
\end{equation}
for any closed subgroup $H$ of $G$.

A closed subgroup of $G$ is called \emph{topologically finitely generated} if it is the closure of a finitely generated subgroup of $G$.

Note that, since $G$ is compact Hausdorff group, so is each of its closed subgroups.

\subsection{Finitely constrained groups}

Finitely constrained groups, introduced by Grigorchuk~\cite{grigorchuk:unsolved}, combine the group theoretic, topological, and symbolic dynamics aspects of tree automorphisms. Namely, they are subgroups of $G$, they are topologically closed (with respect to the metric on $G$ defined in the previous section), and they are closed under self-similarity. The last two properties make them tree subshifts (see~\cite{aubrun-b:finite,ceccherini-c-f-s:cellular-long}). Note that in general, a tree subshift has no group structure. It is well-known that tree subshifts can always be constructed by specifying a set of forbidden patterns, and we now make precise all these notions, but only in the limited setting of the binary rooted tree $X^*$, with labels on the vertices coming from the alphabet $\Sym(X)=C_2$.

For $d > 0$, a \emph{pattern of size $d$} is a map from $X^{(d)}$ to $C_2$. For $g \in G$, we say a pattern $p$ of size $d$ \emph{appears at w in g} if $g_{(wv)} = p(v)$, for $v \in X^{(d)}$. Given a subset $S \subseteq G$, we say a pattern $p$ \emph{appears in S} if $p$ appears at some vertex in some element of $S$. Given a set $\mathcal{F}$ of patterns, we can define a \emph{tree subshift} $Y_{\mathcal{F}}$ to be the subset of $G$ such that no pattern in $\mathcal{F}$ appears in any $y \in Y_{\mathcal{F}}$. Moreover, every tree subshift $Y$ has a defining set $\mathcal{F}$ of forbidden patterns. If the set $\mathcal{F}$ can be taken to be finite, then $Y_{\mathcal{F}}$ is a \emph{tree subshift of finite type}.

By taking possible extensions of patterns as needed, we can assume that all forbidden patterns for a tree subshift of finite type are of the same size. The complement of the finite set $\mathcal{F}$ in $(C_2)^{X^{(d)}}$ is the \emph{set of allowed patterns}.

\begin{definition}
A \emph{finitely constrained group} is a subgroup of $G=\Aut(X^*)$ which is a tree subshift of finite type.
\end{definition}

The allowed patterns of a finitely constrained group $H$ form a subgroup of $G(d)$ isomorphic to $H/H_d$.

Conversely, any subgroup of $G(d)$, for $d \geq 1$, corresponds to a set of patterns of size $d$, which may be used as the set of allowed patterns to construct a finitely constrained group. Indeed, in the finite context, the group $G(d)$, $d \geq 1$, corresponds bijectively to the set of functions $(C_2)^{X^{(d)}}$ under the (finite version of the) portrait map given by $\phi(g) = (\alpha_{v}(g))_{v \in X^{(d)}}$. We want to consider only pattern groups in which all patterns are actually used in the tree subshift that they define.

\begin{definition}
A \emph{pattern group of size d} is a subgroup of $G(d)$, $d \geq 1$. A pattern group $P$ is an \emph{essential pattern group} if for all $g \in P$ and $i = 0,1$, there exists $h_i \in P$ such that $(h_i)_{(w)} = g_{(iw)}$ for all $w \in X^{(d-1)}$.
\end{definition}

Given a pattern group $P$, we define the \emph{self-similar group defined by P} as the group whose allowed patterns of size $d$ are precisely the elements of $P$. Note that any pattern group can be reduced to an essential pattern group which defines the same self-similar group. Thus every finitely constrained group is defined by some essential pattern group.

Bondarenko and Samoilovych~\cite{bondarenko-s:size3} provide algorithms to determine finiteness and level-transitivity of groups defined by essential pattern groups. While they do not state explicitly the following simple formula for the Hausdorff dimension of a finitely constrained group, it may be easily inferred from parts of the proof of their criterion for finiteness of $G_P$ (see~\cite[Proposition~1]{bondarenko-s:size3}) and Equation~\eqref{e:liminf}.

\begin{lemma}\label{l:bs-formula}
Let $P$ be an essential pattern group with patterns of size $d$, and let $G_P$ be the finitely constrained group defined by $P$. Then
 \[
 \dim_{\mathcal{H}} G_{P} = \frac{\log_{2} |P_{d-1}|}{2^{d-1}}.
 \]
\end{lemma}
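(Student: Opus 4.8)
The plan is to compute the Hausdorff dimension via Equation~\eqref{e:liminf} by carefully understanding the growth rate of $[G_P : (G_P)_n]$ as $n \to \infty$, and to show that the liminf in~\eqref{e:liminf} stabilizes into a clean closed form governed by the single number $|P_{d-1}|$. The key structural fact I would exploit is self-similarity: since $G_P$ is defined by an essential pattern group $P$, the allowed patterns at \emph{every} vertex are exactly the elements of $P$, so the way the group ``branches'' from one level to the next is homogeneous across the tree. Concretely, I expect that passing from level $n$ to level $n+1$ multiplies the index $[G_P : (G_P)_n]$ by a factor that, for large $n$, equals a fixed power of $2$ determined by how many independent label-choices on level $n$ are permitted once the pattern constraints are imposed.

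The first step is to set up the right recursive bookkeeping. For a vertex $v$ of length $k < d$, the labels $g_{(v)}, g_{(vw)}$ for $w \in X^{(d-1)}$ must together form an allowed pattern, i.e.\ an element of $P$; the essentiality condition guarantees that the section data at the two children $v0, v1$ are again realized by genuine elements of $P$. So I would count, for $n \geq d-1$, the number of distinct labelings on level $n$ that extend a fixed admissible labeling of all earlier levels. The combinatorial heart is that once the tree is deep enough (past level $d-1$), each new level's freedom is governed precisely by $P_{d-1} = \Stab_P(X^{d-1})$: the sections that fix everything down to the bottom level of the pattern are exactly those that contribute ``fresh'' independent activity on the newly exposed level. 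I would make this precise by identifying the quotient $(G_P)_n / (G_P)_{n+1}$ with a direct product of copies of $P_{d-1}$ (one copy per vertex on an appropriate level), giving $[(G_P)_n : (G_P)_{n+1}] = |P_{d-1}|^{2^{\,n-(d-1)}}$ for $n \geq d-1$.

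Granting this, the computation of the liminf becomes a routine telescoping. I would write $\log_2[G_P : (G_P)_n] = \log_2[G_P : (G_P)_{d-1}] + \sum_{k=d-1}^{n-1} 2^{\,k-(d-1)} \log_2|P_{d-1}|$, where the first term is a constant independent of $n$. The geometric sum $\sum_{k=d-1}^{n-1} 2^{\,k-(d-1)} = 2^{\,n-(d-1)} - 1$ then gives a closed form, and dividing by $2^n - 1$ and sending $n \to \infty$ yields
\[
\dim_{\mathcal{H}} G_P = \lim_{n \to \infty} \frac{2^{\,n-(d-1)} \log_2|P_{d-1}|}{2^n - 1} = \frac{\log_2|P_{d-1}|}{2^{d-1}},
\]
with the constant term and the $-1$'s washing out in the limit. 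Because the ratio converges (not merely has a liminf), this identifies the Hausdorff dimension unambiguously.

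The main obstacle I anticipate is step one: rigorously justifying the identification $[(G_P)_n : (G_P)_{n+1}] = |P_{d-1}|^{2^{\,n-(d-1)}}$, and in particular verifying that the level-$(d-1)$ stabilizer of $P$ \emph{exactly} captures the per-vertex branching freedom once $n \geq d-1$. One must check two directions: that every labeling of level $n+1$ arising from $P_{d-1}$-data on the relevant vertices is genuinely realizable by an element of $G_P$ stabilizing level $n$ (this is where essentiality, which lets one freely prescribe sections at children, does the work), and conversely that no additional constraints beyond those recorded in $P_{d-1}$ survive to further restrict level $n+1$. This is precisely the content lurking in the finiteness proof of Bondarenko and Samoilovych (\cite[Proposition~1]{bondarenko-s:size3}) that the statement invokes, so I would either extract the relevant counting from their argument or reprove it directly using the homogeneity of the pattern constraints across vertices. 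Everything downstream is elementary geometric-series manipulation.
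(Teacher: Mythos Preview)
Your proposal is correct and matches the paper's indicated approach. The paper does not supply a proof but simply notes that the formula follows from parts of the proof of \cite[Proposition~1]{bondarenko-s:size3} combined with Equation~\eqref{e:liminf}; your argument---establishing $[(G_P)_n:(G_P)_{n+1}]=|P_{d-1}|^{2^{\,n-(d-1)}}$ for $n\ge d-1$ via essentiality and then evaluating the resulting geometric-series limit---is exactly the computation one extracts from that reference.
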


\begin{remark}
For each self-similar group $H$ of binary tree automorphisms, the map $\psi: H_1 \to H \times H$ given by $h \mapsto (h_0,h_1)$ is an embedding. It is common to identify $H_1$ with its image in $H \times H$ under $\psi$. The formula from~\cite{sunic:hausdorff} for Hausdorff dimension of a finitely constrained group $H$ of binary rooted tree automorphism defined by forbidden patterns of size $d$ states that
\[
 \dimh(H) = \frac{r-t+1}{2^{d-1}},
\]
where $2^t=[H \times H:H_1]$ and $2^r=[H:H_{d-1}]$. Combining this formula with the formula in Lemma~\ref{l:bs-formula}, we obtain a new relation
\[
 2 \cdot [H:H_{d-1}]= |P_{d-1}|\cdot [H\times H : H_1],
\]
where $P$ is the essential pattern group of size $d$, defining $H$. Since $[H:H_{d-1}] = [P:P_{d-1}] = |P|/|P_{d-1}|$, we also have
\begin{equation}\label{e:new-relation}
 2 |P| = |P_{d-1}|^2 \cdot [H\times H : H_1],
\end{equation}
\end{remark}


\section{Maximal Hausdorff dimension corresponds to maximal subgroups}\label{s:maximal}

The maximal subgroups of $G(d)$ correspond bijectively to nonempty subsets of $\{0,\dots,d-1\}$ as follows. For a nonempty subset $J \subseteq \{0,\dots,d-1\}$, define a subgroup $P_J$ by
\[
 P_J = \{\ g \in G(d) \mid \act_J(g) = 0 \ \}.
\]
In other words, $P_J$ is the kernel of the nontrivial homomorphism $\alpha_J: G(d) \to C_2$. Conversely, every maximal subgroup $P$ is the kernel of a nontrivial homomorphism $\theta: G(d) \to C_2$. If we define $J=\{ j \in \{0,\dots,d-1\} \mid a_j \not \in \ker \theta \}$, then $P=P_J$. Note that there are $2^d-1$ maximal subgroups of $G(d)$, and Theorem~\ref{t:main-equiv} claims that only $2^{d-1}$ of them, those that do not contain $a_{d-1}$, can be used as essential pattern groups. Moreover, no other group above the commutator, with the exception of the whole group $G(d)$, can be used as an essential pattern group. The condition that $a_{d-1}$ is not in $P_J$ is equivalent to the condition that $d-1$ is in $J$.

In order to prove Theorem~\ref{t:main-equiv}, we use the following technical, but straightforward result (an exercise in using the chain rule for permutational wreath products).

\begin{lemma}\label{l:conjugating}
For $g \in G(d)$ and $h \in G_{d-1}(d)$, the conjugate $h^g$ is in $G_{d-1}(d)$ and, for $v \in X^{d-1}$,
\[
 \left(h^g\right)_{(v)} = h_{(g(v))}.
\]
\end{lemma}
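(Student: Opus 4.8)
The plan is to derive both assertions directly from the chain rule $(hg)_u = h_{g(u)}g_u$ and the inversion formula $(g^{-1})_u = (g_{g^{-1}(u)})^{-1}$ recorded earlier, with the hypothesis $h \in G_{d-1}(d)$ entering only through the single fact that $h$ fixes every vertex of level $d-1$. The membership claim is essentially free: $G_{d-1}(d)$ is the kernel of the action homomorphism $G(d) \to \Sym(X^{d-1})$ by which $G(d)$ permutes the level-$(d-1)$ vertices, hence it is normal in $G(d)$, and normality immediately yields $h^g = g^{-1}hg \in G_{d-1}(d)$ for every $g \in G(d)$.

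The substance is the label formula, which I would obtain by computing the section of $h^g$ at $v$ via two applications of the chain rule. Fixing $v \in X^{d-1}$,
\[
 (h^g)_v = (g^{-1}hg)_v = (g^{-1})_{(hg)(v)}\,(hg)_v = (g^{-1})_{(hg)(v)}\,h_{g(v)}\,g_v.
\]
Here the two structural simplifications come from the hypothesis on $h$: since $g$ preserves levels we have $g(v) \in X^{d-1}$, and since $h$ fixes $X^{d-1}$ we get $h(g(v)) = g(v)$, so $(hg)(v) = g(v)$. The inversion formula evaluated at $g(v)$ then collapses to $(g^{-1})_{g(v)} = (g_{g^{-1}(g(v))})^{-1} = (g_v)^{-1}$. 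Assembling these gives $(h^g)_v = (g_v)^{-1}\,h_{g(v)}\,g_v = (h_{g(v)})^{g_v}$.

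It remains to pass from sections to labels by applying the root-action homomorphism $\alpha$ and using that its target $C_2$ is abelian, so that conjugation becomes trivial:
\[
 (h^g)_{(v)} = \alpha\bigl((h^g)_v\bigr) = \alpha\bigl((h_{g(v)})^{g_v}\bigr) = \alpha(h_{g(v)}) = h_{(g(v))},
\]
which is the desired identity. I expect no conceptual obstacle; the only delicate point is the bookkeeping — fixing the conjugation convention $h^g = g^{-1}hg$ consistently and applying the inversion formula at the correct vertex — and it is precisely the abelianness of $C_2$ that makes the final step from sections to activities painless.
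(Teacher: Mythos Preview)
Your proof is correct and follows essentially the same route as the paper's: both compute the section $(h^g)_v$ via the chain rule and inversion formula, use that $h$ fixes $g(v)$ to simplify $(g^{-1})_{h(g(v))}$ to $(g_v)^{-1}$, and then pass to labels using the abelianness of $C_2$. The only differences are cosmetic---you make the normality of $G_{d-1}(d)$ and the role of the root-action homomorphism $\alpha$ more explicit than the paper does.
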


\begin{proof}
We have
\begin{align*}
 \left(h^g\right)_{v} &= \left(g^{-1}hg\right)_v = \left(g^{-1}\right)_{h(g(v))}h_{g(v)}g_v =
 \left(g^{-1}\right)_{g(v)}h_{g(v)}g_v = \\
 &= \left(g_{g^{-1}g(v)}\right)^{-1}h_{g(v)}g_v = \left(g_{v}\right)^{-1}h_{g(v)}g_v,
\end{align*}
which implies that
\[
 \left(h^g\right)_{(v)} = \left(g_{(v)}\right)^{-1}h_{(g(v))}g_{(v)} = h_{(g(v))}. \qedhere
\]
\end{proof}

The following result is also of use.

\begin{proposition}\label{p:transitivity}
Let $H$ be a finitely constrained group on the binary rooted tree $X^*$. The following are equivalent.
\begin{enumerate}
\item $H$ is infinite.

\item $H$ acts transitively on all levels of the tree $X^*$.

\item The Hausdorff dimension of $H$ is positive.
\end{enumerate}
\end{proposition}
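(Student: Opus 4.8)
The plan is to run all three conditions through a single algebraic invariant, the order of $P_{d-1}$, via Lemma~\ref{l:bs-formula}: since $\dim_{\mathcal{H}} G_P = \log_2|P_{d-1}|/2^{d-1}$, condition (i) of the dimension (i.e. our (iii)) is exactly the statement $P_{d-1}\neq 1$. I would then establish $(i)\Leftrightarrow(iii)$ together with $(ii)\Rightarrow(i)$ and $(iii)\Rightarrow(ii)$; these suffice for all three equivalences, and the only substantial implication is $P_{d-1}\neq 1\Rightarrow$ level-transitivity.

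Two of the links are easy. For $(ii)\Rightarrow(i)$: if $G_P$ acts transitively on every level then $|G_P|\geq|X^n|=2^n$ for all $n$, so $G_P$ is infinite. For $(i)\Rightarrow(iii)$ I would argue the contrapositive, \emph{$P_{d-1}=1\Rightarrow G_P$ finite}, which is clean and self-contained. If $P_{d-1}=1$ then the projection of $P$ onto its top $d-1$ levels is injective, so within every allowed pattern the labels on level $d-1$ are determined by the labels on levels $0,\dots,d-2$. Reading this across all size-$d$ windows of a fixed $g\in G_P$, the labels of $g$ on level $m+d-1$ are determined by its labels on levels $m,\dots,m+d-2$; inducting downward, the entire portrait of $g$ is determined by its labels on the finite set $X^{(d-1)}=\bigcup_{i=0}^{d-2}X^i$, whence $|G_P|\leq 2^{|X^{(d-1)}|}=2^{2^{d-1}-1}<\infty$. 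This proves $(i)\Rightarrow(iii)$ in contrapositive form; the reverse $(iii)\Rightarrow(i)$ is immediate, since a finite group has bounded indices $[G_P:(G_P)_n]$ and hence dimension $0$ by~\eqref{e:liminf}.

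The heart of the argument is $(iii)\Rightarrow(ii)$, i.e. $P_{d-1}\neq 1\Rightarrow$ level-transitivity. I would first record the reduction that a subgroup $H\leq G$ is level-transitive if and only if for every vertex $v$ there is $g\in H$ with $g(v)=v$ and $\act_v(g)=1$ (a \emph{swap} at $v$); given transitivity on level $|v|$, a single swap at one level-$|v|$ vertex spreads to all of them, so it suffices to produce one swap at each level and run the level-by-level induction. Starting from a nontrivial $h\in P_{d-1}$, with $h_{(u)}=1$ for some $u=i_1\cdots i_{d-1}\in X^{d-1}$, essentiality of $P$ ensures $h$ is realized as the root window of some $g^*\in G_P$; since $h$ fixes levels $0,\dots,d-1$, so does $g^*$, giving a genuine swap at level $d-1$. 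Taking sections $g^*_{i_1\cdots i_k}$ along the prefixes of $u$ then yields genuine swaps at every level $k=0,\dots,d-1$, and Lemma~\ref{l:conjugating} (conjugation permutes level-$(d-1)$ activity according to the action on $X^{d-1}$) spreads these across each level; the induction delivers transitivity on all levels up to $d$. I would also extract the auxiliary fact that, for every $\tau\in X^{(d)}$, the group $P$ contains a pattern that is trivial on all levels strictly above $|\tau|$ and active exactly at $\tau$ (obtained as a section of a suitable conjugate of $h$), which is the right local building block for placing swaps.

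The main obstacle is extending transitivity past the window size $d$, i.e. producing, for each $n>d-1$, a genuine swap at a level-$n$ vertex. The difficulty is precisely that essentiality gives closure under taking \emph{sections} (pushing activity upward) but not, a priori, the dual operation of realizing a prescribed pattern as a section deep in the tree (pushing activity downward), and conjugation cannot help because it preserves levels. I expect to resolve this using the auxiliary building blocks above together with a recursive (or compactness) construction: to place a swap at $v\in X^n$ one builds an automorphism that is trivial on all proper prefixes of $v$ and whose windows straddling level $n$ at each ancestor $w$ of $v$ are filled by the appropriate ``trivial-above, active-below'' patterns from the stabilizers $P_{j}$, then checks that these local choices extend consistently to a global element of $G_P$. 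Verifying this consistent global extension---that the finite-type constraints can be met simultaneously---is the technical crux; once it is in place, the swap characterization and the induction complete $(iii)\Rightarrow(ii)$ and close all the equivalences.
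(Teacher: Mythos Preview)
The paper does not prove this proposition at all: immediately after the statement it simply cites \cite[Lemma~3]{bondarenko-al:classification32} for the equivalence (i)$\Leftrightarrow$(ii) (valid for arbitrary self-similar subgroups of $G$ on the binary tree) and \cite[Theorem~4(a)]{sunic:hausdorff} for (i)$\Leftrightarrow$(iii). Your proposal is therefore a genuinely different, self-contained attempt, and your arguments for (ii)$\Rightarrow$(i) and for (i)$\Leftrightarrow$(iii) via $|P_{d-1}|$ are correct and clean.

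The gap is exactly where you flag it: the implication (iii)$\Rightarrow$(ii) is only sketched for levels beyond the window size, and the ``consistent global extension'' you describe is never carried out. More to the point, the route you chose for this step is harder than necessary. You already have (iii)$\Rightarrow$(i), so it suffices to prove (i)$\Rightarrow$(ii), and for self-similar groups on the binary tree this is short and needs no pattern-extension machinery. Suppose $G_P$ is infinite. Then the chain $(G_P)_0 \supseteq (G_P)_1 \supseteq \cdots$ is strictly decreasing: if $(G_P)_n = (G_P)_{n+1}$ for some $n$, then for any $g \in (G_P)_{n+1}$ and $x \in X$ the section $g_x \in G_P$ lies in $(G_P)_n = (G_P)_{n+1}$, whence $g \in (G_P)_{n+2}$; iterating, $(G_P)_n = \bigcap_m (G_P)_m = \{1\}$ and $G_P$ is finite, a contradiction. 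Thus for every $n$ there exists $g \in (G_P)_n \setminus (G_P)_{n+1}$, an element trivial on levels $0,\dots,n-1$ with a nontrivial label at some $v \in X^n$. This is precisely your swap at level $n$, produced without any global-extension argument; your own induction (conjugate the swap across level $n$ using transitivity already established there) then yields transitivity on level $n+1$ and closes the loop.
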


The equivalence of (i) and (ii) holds for arbitrary self-similar subgroups of $G$~\cite[Lemma~3]{bondarenko-al:classification32}) (it is important for this equivalence that the tree is binary). The equivalence of (i) and (iii) follows from~\cite[Theorem 4(a)]{sunic:hausdorff}.

\begin{proof}[Proof of Theorem~\ref{t:main-equiv}]
\textbf{(i) implies (iii)}. By Lemma~\ref{l:bs-formula},
\[
 \frac{\log_2 |P_{d-1}|}{2^{d-1}} = 1 - \frac{1}{2^{d-1}},
\]
which gives
\[
 |P_{d-1}| = 2^{2^{d-1}-1}.
\]
Since
\[
 G_{d-1}(d) \cong \prod_{v \in X^{d-1}} C_2
\]
is the elementary abelian group of rank $2^{d-1}$, we see that $[G_{d-1}(d):P_{d-1}]=2$ and $P_{d-1}$ is maximal in $G_{d-1}(d)$.

Every maximal subgroup of $G_{d-1}(d)$ has the form
\[
 M_V = \{ g \in G_{d-1}(d) \mid \beta_V(g)=0 \},
\]
where $V \subseteq X^{d-1}$ is a nonempty set of vertices on level $d-1$ and \[
 \beta_V(g) = \sum_{v \in V} g_{(v)}
\]
is the total activity, mod 2, of $g$ at the vertices in $V$. The set of vertices $V$ uniquely determines the group $M_V$ (different sets define different maximal subgroups).

We claim that $P_{d-1} = M_{X^{d-1}}$ (to say it differently, we claim that $P_{d-1}$ consists of those elements $g$ in $G_{d-1}(d)$ for which $\alpha_{\{d-1\}}(g)=0$).

Let $P_{d-1}=M_V$, for some nonempty subset $V \subseteq X^{d-1}$.

By Lemma~\ref{l:conjugating}, for $g \in G(d)$, we have $(M_V)^g = M_{g^{-1}V}$. Indeed,
\begin{align*}
 (M_V)^g &= \left\{ h^g \in G_{d-1}(d) \mid \sum_{v \in V} h_{(v)}=0 \right\}  \\
 &= \left\{ f \in G_{d-1}(d) \mid \sum_{v \in V} \left(f^{g^{-1}}\right)_{(v)}=0 \right\}  \\
 &= \left\{ f \in G_{d-1}(d) \mid \sum_{v \in V}\left(f\right)_{(g^{-1}(v))} =0 \right\}  \\
 &= M_{g^{-1}V}.
\end{align*}
Since $P_{d-1}$ is normal in $P$, we have, for $g \in P$,
\[
 M_V = P_{d-1} = \left(P_{d-1}\right)^g = \left(M_V\right)^g = M_{g^{-1}V}.
\]
Therefore, the set of vertices $V$ is invariant under the action of every element $g \in P$.

Since $d \geq 2$, the Hausdorff dimension $1-1/2^{d-1}$ is positive. By Proposition~\ref{p:transitivity}, this implies that $G_P$ acts transitively on all levels of the tree, which means that $P$ acts transitively on $X^{d-1}$. Since $V$ is a nonempty set of vertices that is invariant under the action of $P$, it follows that $V=X^{d-1}$, as claimed.

Since
\[
 P_{d-1} = M_{X^{d-1}} = \{ g \in G_{d-1}(d) \mid \alpha_{\{d-1\}}(g)=0 \}
\]
and $\alpha_{\{d-1\}}(a_{d-1}) = 1$, the element $a_{d-1}$ is not in $P$.

In order to show that $P$ is maximal in $G(d)$, we will show that the index $[P:P_{d-1}]$ is equal to $[G(d):G_{d-1}(d)]$, which is immediate from the following claim. For every pattern $h$ of size $d-1$ (an element $h \in G(d-1)$), there exists a pattern $g$ of size $d$ in $P$ (an element $g \in P \leq G(d)$ such that $h$ and $g$ agree on the first $d-1$ levels, i.e. levels 0 through $d-2$). It remains to prove the last claim.

For $i=0\,\dots,d-1$, let $P_i=\Stab_P(X^i)$ be the stabilizer of level $i$ in $P$. We claim that, for $i=0,\dots,d-2$, the stabilizer $P_i$ contains an element with every possible pattern of labels (vertex permutations) on level $i$. Indeed, there are elements in $P_{d-1}$ with every possible pattern on the vertices of the form $0v$ on level $d-1$ (the $2^{d-2}$ vertices in the left half of level $d-1$ in the tree), because a tree automorphism $g$ that stabilizes level $d-1$ and for which $g_{(0v)}=g_{(1v)}$, for $v \in X^{d-2}$, is necessarily an element of $P_{d-1}$. This follows from the fact that for such an element, the labels in the left half of level $d-1$ are repeated in the right half of level $d-1$, so the total activity on that level is 0. Since there are elements in $P_{d-1}$ with every possible pattern on the vertices of the form $0v$ on level $d-1$, and since every pattern in $P$ is extendable, it follows that, for $i=0,\dots,d-2$, the stabilizer $P_i$ contains an element with every possible pattern of labels on level $i$. Note that when we extend a pattern $g$ of size $d$ in $P_{d-1}$ to an allowed pattern $g'$ of size $2d-1-i$, and then restrict $g'$ to the subpattern of $g'$ of size $d$ that appears at vertex $0^{d-1-i}$, we obtain a pattern $g''$ of size $d$ in $P_i$ with labels on the vertices at level $i$ equal to the labels in the pattern $g$ on the $2^i$ vertices of the form $0^{d-1-i}v$, for $v \in X^{i}$

 (see Figure~\ref{f:extend-restrict}).
\begin{figure}[!ht]
\[
 \xymatrix@R=7pt@C=20pt{
 \text{\small{level} } \scriptstyle{0} &
 &
 &
 &
 *{} \ar@{-}[ddddrr]& & &
 \ar@{<..>}[dddd]_{\text{size } d}
 &
 &
  \ar@{<..>}[dddddd]^{\text{size } 2d-1-i}
 \\
 &&& &&& &&&
 \\
 \text{\small{level} } \scriptstyle{d-1-i} &
 &
 &
 *{} \ar@{-}[ddddrr] \ar@{}[l]|{0^{d-1-i}} & & & &
 &
 \ar@{<..>}[dddd]^{\text{size } d}
 &
 \\
 &&& & \ar@{}[u]|{g} && &&&
 \\
 \text{\small{level} } \scriptstyle{d-1} &
 &
 *{} \ar@{-}[rrrr]& & & & *{} &
 &
 &
 \\
 &&& \ar@{}[u]|<{g''} &&& &&&
 \\
 \text{\small{level} } \scriptstyle{2d-2-i} &
 *{} \ar@{-}[uuuuuurrr] \ar@{-}[rrrr] & & & & *{} & &
 &
 &
 }
\]
\caption{Extending and then restricting a pattern $g \in P_{d-1}$ to obtain a pattern $g'' \in P_i$}
\label{f:extend-restrict}
\end{figure}
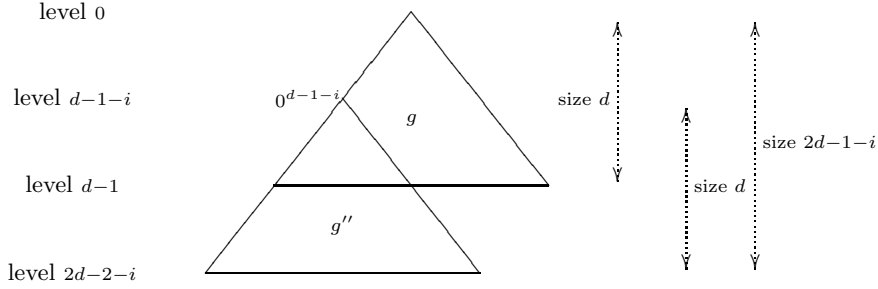
Finally, since, for $i=0,\dots,d-2$, the stabilizer $P_i$ contains elements with every possible pattern of labels on level $i$ we can obtain, by appropriate multiplication of elements, one from $P_i$, for each $i=0,\dots,d-2$, an element $g \in P$ that agrees with $h$ on the first $d-1$ levels (0 through $d-2$).

\textbf{(iii) implies (ii)} Clear, since each maximal subgroup of $G(d)$ contains the commutator subgroup of $G(d)$ (each maximal subgroup $P$ has index 2 and the quotient $G(d)/P$ is abelian).

\textbf{(ii) implies (i)} For $g \in G(d)$, we have $(a_{d-1})^g a_{d-1} = [g,a_{d-1}] \in [G(d),G(d)] \leq P$. Further, $(a_{d-1})^g a_{d-1} \in P_{d-1}$, since it stabilizes level $d-1$, and its only nontrivial activity on level $d-1$ occurs, by Lemma~\ref{l:conjugating}, at the vertices $0^{d-1}$ and $g(0^{d-1})$. Since $G(d)$ acts transitively on $X^{d-1}$, we have $M_{X^{d-1}} \leq P_{d-1}$. Therefore, either $M_{X^{d-1}} = P_{d-1}$ or $G_{d-1}(d)=P$. By Lemma~\ref{l:bs-formula}, the Hausdorff dimension of $G_P$ is $1-1/2^{d-1}$ in the former case and 1 in the latter. However, Hausdorff dimension 1 would imply that $P=G(d)$, and this contradicts the assumption that $P$ is a proper subgroup of $G(d)$. Therefore $\dimh(G_P)=1-1/2^{d-1}$.
\end{proof}

\begin{remark}
For each finitely constrained group $G_P$ defined by a maximal subgroup $P$ of $G(d)$ that does not contain $a_{d-1}$ the first level stabilizer $(G_P)_1$ is a maximal subgroup of $G_P \times G_P$. Indeed, the fact that, in this case,
\[
 |P| = \frac{|G(d)|}{2} = 2^{2^{d}-2} = \left(2^ {2^{d-1}-1}\right)^2 = |P_{d-1}|^2
\]
and~\eqref{e:new-relation} imply that
\[
 [G_P \times G_P : (G_P)_1] = 2.
\]
\end{remark}


\section{Groups of maximal Hausdorff dimension are not topologically finitely generated}\label{s:commutator}

\subsection{Commutators in maximal subgroups}

The purpose of this subsection is to prove the following result, which is a key ingredient in the proof of Theorem~\ref{t:main}.

\begin{proposition}\label{p:no-adad}
Let $P$ be a maximal subgroup of $G(d)$ that does not contain $a_{d-1}$. Then the commutator subgroup $[P,P]$ does not contain $[a_0,a_{d-1}]$.
\end{proposition}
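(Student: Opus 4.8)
The plan is to study the abelianization $P/[P,P]$ and locate $[a_0,a_{d-1}]$ relative to $[P,P]$ by finding a homomorphism $\chi\colon P \to C_2$ that kills $[P,P]$ (automatic, as $C_2$ is abelian) but does not kill $[a_0,a_{d-1}]$; exhibiting such a $\chi$ forces $[a_0,a_{d-1}] \notin [P,P]$. First I would record the structure: by Theorem~\ref{t:main-equiv}, since $P$ is maximal and $a_{d-1} \notin P$, we have $d-1 \in J$ where $P = P_J = \ker \alpha_J$, so $\alpha_J(a_{d-1}) = 1$. Both $[a_0,a_{d-1}]$ and the candidate homomorphisms should be understood through the activity maps $\alpha_{\{j\}}$ and the level-$(d-1)$ activity $\beta_V$ from Section~\ref{s:maximal}. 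Note that $[a_0,a_{d-1}] = (a_{d-1})^{a_0} a_{d-1}$ stabilizes level $d-1$ (it lies in $P_{d-1}$, as shown in the (ii)$\Rightarrow$(i) argument), and by Lemma~\ref{l:conjugating} its only nontrivial activities on level $d-1$ occur at $0^{d-1}$ and at $a_0(0^{d-1})$, the two vertices $0^{d-1}$ and $10^{d-2}$ (since $a_0$ swaps the two halves of the tree at the root).

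The key is to build a homomorphism on $P$ that detects this specific element. A natural candidate is a partial level-$(d-1)$ activity $\beta_W(g) = \sum_{w \in W} g_{(w)}$ for a suitable $W \subseteq X^{d-1}$, restricted to $P_{d-1}$, together with an extension to all of $P$. Evaluating on $[a_0,a_{d-1}]$, we get $\beta_W([a_0,a_{d-1}]) = 1$ exactly when $W$ contains exactly one of the two vertices $0^{d-1}$, $10^{d-2}$. The obstacle is that $\beta_W$ is only manifestly a homomorphism on the abelian group $G_{d-1}(d)$; to conclude $[a_0,a_{d-1}] \notin [P,P]$ I need a genuine homomorphism defined on all of $P$ (or at least one whose restriction to $[P,P]$ is trivial). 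So the central step is to show that some such $W$-activity extends to a homomorphism $\chi\colon P \to C_2$. Since $P_{d-1}$ is normal in $P$ and $\beta_W$ is invariant under the $P$-action precisely when $W$ is $P$-invariant (again by Lemma~\ref{l:conjugating}, conjugation permutes the vertices via the level action), the difficulty is reconciling two competing demands: $W$ must be $P$-invariant to yield a well-defined class function, yet must separate $0^{d-1}$ from $10^{d-2}$, and by transitivity of $P$ on $X^{d-1}$ the only invariant subsets are $\emptyset$ and all of $X^{d-1}$ — neither of which separates the two vertices.

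This tension is the heart of the proof, and I would resolve it by not insisting on $P$-invariance of $W$ but instead constructing $\chi$ via the quotient $P/[P,P]$ directly. Concretely, I would identify $P_{d-1}[P,P]/[P,P]$ inside the abelianization and compute the image of $[a_0,a_{d-1}]$ there; the goal is to show this image is nontrivial. Here I expect to use the presentation of $G(d)$ from the proposition in Section~\ref{s:aut}, the relation $[G(d),G(d)] \le P$ (so that $P/[G(d),G(d)]$ is an elementary abelian quotient of the rank-$d$ group $G(d)/[G(d),G(d)]$), and the fact that $[P,P]$ may be strictly smaller than $[G(d),G(d)]$. The main obstacle, then, is genuinely understanding $[P,P]$ as opposed to the larger and easier-to-handle $[G(d),G(d)]$: I must verify that $[a_0,a_{d-1}]$, while certainly lying in $[G(d),G(d)]$, escapes the smaller group $[P,P]$. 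I anticipate doing this by exhibiting an explicit $C_2$-valued function on $P$ that restricts to a separating $\beta_W$ on $P_{d-1}$ and is forced to be a homomorphism by a direct cocycle/commutator computation using the chain rule and the structure $[G_P \times G_P : (G_P)_1] = 2$ noted in the remark, thereby certifying $[a_0,a_{d-1}] \notin [P,P]$.
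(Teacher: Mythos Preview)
Your overall strategy---find a $C_2$-valued function on $P$ that vanishes on $[P,P]$ but not on $[a_0,a_{d-1}]$---matches the paper's. You also correctly diagnose why the single-level activity $\beta_W$ on $X^{d-1}$ cannot serve as that function: $P$ acts transitively on $X^{d-1}$, so no $P$-invariant $W$ separates $0^{d-1}$ from $10^{d-2}$. But from that point on the proposal is only a promise (``an explicit $C_2$-valued function \ldots forced to be a homomorphism by a direct cocycle/commutator computation''), and the promise hides the one idea the proof actually needs.

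The missing idea is this: do not restrict to level $d-1$. Write $J'=J\setminus\{0\}$ (nonempty since $d-1\in J$) and define, for $i\in\{0,1\}$,
\[
N_i(g)=\sum_{j\in J'}\ \sum_{v\in iX^{j-1}} g_{(v)} \pmod 2,
\]
the total activity of $g$ on the levels in $J'$ restricted to the $i$th half of the tree. These $N_i$ are \emph{not} homomorphisms on $P$ in general; instead they obey the twisted multiplicativity $N_i(gh)=N_i(h)+N_{i+\alpha_0(h)}(g)$ (a one-line application of the chain rule, exactly the ``cocycle'' computation you allude to). From this one gets
\[
N_i([g,h])=N_i(g)+N_{i+\alpha_0(h)}(g)+N_i(h)+N_{i+\alpha_0(g)}(h),
\]
and using $N_0(h)+N_1(h)=\alpha_{J'}(h)=I_0\,\alpha_0(h)$ for $h\in P_J$ (where $I_0=\mathbf 1_{0\in J}$), both summand pairs collapse to $I_0\,\alpha_0(g)\alpha_0(h)$ and cancel. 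Hence $N_0\equiv N_1\equiv 0$ on $[P_J,P_J]$. Finally $N_0([a_0,a_{d-1}])=1$ because $d-1\in J'$ and the only nontrivial labels of $[a_0,a_{d-1}]$ sit at $0^{d-1}$ and $10^{d-2}$, exactly one of which lies in the left half. Your level-$(d-1)$ function $\beta_W$ is the special case $J'=\{d-1\}$, and that case is precisely where the transitivity obstruction bites; enlarging to all of $J'$ and splitting by halves of the tree is what dissolves it.
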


For the duration of the rest of the section we fix a size $d$, $d \geq 2$, and a maximal subgroup $P_J$ of $G(d)$ that may serve as a pattern group of pattern size $d$. In other words, we fix $J$ that contains $d-1$. Let $J'=J -\{0\}$ (note that $J'$ is nonempty, as it contains $d-1$).

 In this section, all arithmetic operations are modulo 2.

\begin{definition}
For $i = 0,1$ and $g \in G$, let $N_i(g)$ be the total activity of the group element $g$ in the $i$th part of the tree on the levels in $J'$, i.e.,
\[
 N_i(g) = \sum_{j \in J'} \sum_{\substack{v \in X^j \\ v = iv'}} g_{v}.
\]
\end{definition}

\begin{remark}
Note that $N_0(g)$ and $N_1(g)$ are the parities of the number of nontrivial labels in the portrait of $g$ on the vertices in the left and in the right half of the tree, respectively, on the levels in $J'$. Multiplication of $g$ by $a_0$ on the right has the effect of exchanging these two parities.

We will make no use of the following observation, but it is worth noting that there is another way to think about $N_0(g)$ and $N_1(g)$. Let $g$ be expressed as a word $U$ in the generators $a_0,\dots,a_{d-1}$. A letter $a_j$ such that $j \in J'$ is called a $J'$-letter. A specific occurrence of a $J'$-letter $a_j$ in the word $U$ is declared even or odd depending on whether $\alpha(U')=0$ or $\alpha(U')=1$, respectively, where $U'$ is the suffix of the word $U$ following the given occurrence of the letter $a_j$. The number $N_0(g)$ is then the parity of the number of even occurrences and $N_1(g)$ is the parity of the number of odd occurrences of the $J'$-letters in $U$.
\end{remark}

\begin{lemma}\label{l:ni}
For $g,h\in G$ and $i=0,1$,

\begin{enumerate}
\item
\[
 N_i(gh) = N_i(h) + N_{i+\act_0(h)}(g),
\]
\item
\[
 N_i(g^{-1}) = N_{i+\act_0(g)}(g).
\]
\item
\[
 N_i([g,h]) = N_i(g) + N_{i+\act_0(h)}(g) + N_i(h) + N_{i+\act_0(g)}(h).
\]
\end{enumerate}
\end{lemma}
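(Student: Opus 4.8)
The plan is to prove all three formulas by the same elementary bookkeeping, tracking how activity on the levels in $J'$ accumulates under the chain rule, and to deduce (ii) and (iii) from (i). The central observation I would make precise first is the following interpretation of $\act_0(h)$: when we compute the section labels of a product $gh$ using the chain rule $(gh)_w = g_{h(w)}h_w$, the contribution of the $g$-part at a vertex $v = iv'$ on a level $j \in J'$ comes from $g_{h(v)}$, and since $h$ permutes the tree, the vertex $h(v)$ lies in the left half if and only if $v$ does, \emph{unless} $h$ has moved it across the root. But $h(v)$ is in the $i$th half precisely when the first letter of $v$ is sent by $h$ into the $i$th half; at the root level the only thing $h$ can do is apply its root permutation $\act_0(h)$. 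Thus summing the $g$-contributions over the $i$th half of a level in $J'$ and collecting labels according to where $h$ sends them, the $g$-activity originally counted by $N_i(g)$ gets reindexed into $N_{i+\act_0(h)}(g)$. This is the whole content of the index shift $i \mapsto i + \act_0(h)$.

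\textbf{Proof of (i).} I would compute $N_i(gh)$ directly from the definition. Fix $j \in J'$ and a vertex $v = iv' \in X^j$. By the chain rule, $(gh)_v = g_{h(v)}\, h_v$, so the label satisfies $(gh)_{(v)} = g_{(h(v))} + h_{(v)}$ (working mod $2$, as stipulated). Summing over all such $v$ in the $i$th half and over all $j \in J'$ splits the sum into an $h$-part and a $g$-part. The $h$-part is exactly $N_i(h)$. For the $g$-part, as $v$ ranges over the vertices $iv'$ on level $j$, the images $h(v)$ range bijectively over the vertices on level $j$ whose first letter is $i + \act_0(h)$ — because $h$ restricted to level $j$ is a bijection preserving length, and the half into which a vertex is mapped is determined by the root permutation $\act_0(h)$. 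Hence the $g$-part equals $N_{i+\act_0(h)}(g)$, giving $N_i(gh) = N_i(h) + N_{i+\act_0(h)}(g)$.

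\textbf{Proof of (ii) and (iii).} Both follow formally from (i). For (ii), apply (i) to $g\cdot g^{-1} = e$: since $N_i(e) = 0$ and $\act_0(g^{-1}) = \act_0(g)$, I get $0 = N_i(g^{-1}) + N_{i+\act_0(g)}(g)$, and as we are in characteristic $2$ this rearranges to $N_i(g^{-1}) = N_{i+\act_0(g)}(g)$. For (iii), I expand $[g,h] = g^{-1}h^{-1}gh$ by applying (i) three times, repeatedly using $\act_0$ as a homomorphism to track the cumulative root-permutation index shift at each stage, and then substituting the formula from (ii) to replace $N_i(g^{-1})$ and $N_i(h^{-1})$ by shifted values of $N(g)$ and $N(h)$. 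After collecting terms mod $2$, the four surviving summands are exactly $N_i(g) + N_{i+\act_0(h)}(g) + N_i(h) + N_{i+\act_0(g)}(h)$.

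The only genuinely delicate point — and the step I expect to be the main obstacle — is the index-shift claim in the proof of (i): that summing $g_{(h(v))}$ over the $i$th half of a level in $J'$ reproduces $N_{i+\act_0(h)}(g)$. This requires that $h$ maps the $i$th half of each level $j \in J'$ bijectively onto the $(i+\act_0(h))$th half, which holds because $h$ preserves the prefix relation and its action on the first coordinate is governed solely by the root permutation $\act_0(h)$; one must be careful that this reindexing is a genuine bijection on each level so that no label is counted twice or omitted. Once this is stated cleanly, the remaining manipulations are routine mod-$2$ algebra and the homomorphism property of $\act_0$.
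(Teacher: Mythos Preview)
Your proof is correct. Parts (ii) and (iii) are handled exactly as in the paper: deduce (ii) from (i) applied to $gg^{-1}=e$, then expand $[g,h]=g^{-1}h^{-1}gh$ by repeated use of (i), the homomorphism property of $\act_0$, and (ii).

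For (i) your route differs slightly from the paper's. You argue directly from the chain rule $(gh)_v = g_{h(v)}h_v$, take root activities to get $(gh)_{(v)} = g_{(h(v))} + h_{(v)}$, and then observe that as $v$ ranges over the $i$th half of level $j$, the image $h(v)$ ranges bijectively over the $(i+\act_0(h))$th half, so the $g$-sum reindexes to $N_{i+\act_0(h)}(g)$. The paper instead uses the algebraic factorization $gh = \bigl(a_0^{\alpha(h)}\, g^{a_0^{\alpha(h)}}\bigr)\bigl(a_0^{-\alpha(h)}h\bigr)$, notes that the right factor stabilizes level~1 so $N_i$ is additive, and then reads off $N_i(g^{a_0^{\alpha(h)}}) = N_{i+\alpha(h)}(g)$ from the fact that conjugation by $a_0$ swaps the two halves. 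Both arguments encode the same geometric fact (the root permutation of $h$ decides which half receives the $g$-labels); yours is a touch more direct, while the paper's packages the half-swap as conjugation by $a_0$.
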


\begin{proof}
(i) Note that $gh = \left(a_0^{\alpha(h)} g^{a_0^{\alpha(h)}}\right) \left(a_0^{-\alpha(h)}h\right)$. Since the latter factor stabilizes level 1 (i.e., does not exchange the left and the right half of the tree) and since $a_0$ does not contribute to the activity on the levels in $J'$,
\[
 N_i(gh) = N_i(g^{a_0^{\alpha(h)}}) + N_i(h).
\]
Since the conjugate $g^{a_0^{\alpha(h)}}$ is equal to $g$, when $\alpha(h)$ is 0, and has the same labels as $g$ but exchanged between the left and the right subtree, when $\alpha(h)=1$, we have $N_i(g^{a_0^{\alpha(h)}}) = N_{i+\alpha(h)}(g)$, and the claim follows.

(ii) Follows directly from (i) by setting $h=g^{-1}$ and observing that $\alpha(1)=0$ and $\alpha(g^{-1}) = \alpha(g)$.

(iii) By using (i) and (ii)
\begin{align*}
 N_i([g,h]) &= N_i(g^{-1}h^{-1}gh) = \\
  &= N_i(h) + N_{i+\alpha(h)}(g) + N_{i+\alpha(gh)}(h^{-1}) + N_{i+\alpha(h^{-1}gh)}(g^{-1}) = \\
  &= N_i(h) + N_{i+\alpha(h)}(g) + N_{i+\alpha(g)+\alpha(h)}(h^{-1}) + N_{i+\alpha(g)}(g^{-1}) = \\
  &= N_i(h) + N_{i+\alpha(h)}(g) + N_{i+\alpha(g)+\alpha(h)+\alpha(h)}(h) + N_{i+\alpha(g)+\alpha(g)}(g) = \\
 &=
  N_i(g) + N_{i+\act_0(h)}(g) + N_i(h) + N_{i+\act_0(g)}(h). \qedhere
\end{align*}
\end{proof}

\begin{lemma}\label{l:eveneven}
For $g,h \in P_J$,
\[
 N_0([g,h]) = N_1([g,h]) = 0.
\]
More generally, for every element $f \in [P_J,P_J]$,
\[
 N_0(f)=N_1(f)=0.
\]
\end{lemma}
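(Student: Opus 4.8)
The plan is to first reduce $N_i([g,h])$ to a clean closed form using the commutator identity of Lemma~\ref{l:ni}, then impose the defining constraint of $P_J$, and finally bootstrap from single commutators to arbitrary elements of $[P_J,P_J]$.

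First I would record the identity $N_0(g)+N_1(g)=\alpha_{J'}(g)$, obtained by summing the definition of $N_i$ over $i=0,1$ (every vertex on a level in $J'$, a level $\geq 1$, begins with either $0$ or $1$). Then, starting from Lemma~\ref{l:ni}(iii) and writing $\alpha$ for the root action, I would note that $N_i(g)+N_{i+\alpha(h)}(g)$ equals $0$ when $\alpha(h)=0$ and equals $N_0(g)+N_1(g)=\alpha_{J'}(g)$ when $\alpha(h)=1$, so in both cases it equals $\alpha(h)\,\alpha_{J'}(g)$; treating the symmetric pair the same way collapses the four-term expression to
\[
N_i([g,h]) = \alpha(h)\,\alpha_{J'}(g) + \alpha(g)\,\alpha_{J'}(h),
\]
which is conspicuously independent of $i$.

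Next I would feed in membership in $P_J$. Since $\alpha_J=\alpha_{J'}+\alpha$ when $0\in J$ and $\alpha_J=\alpha_{J'}$ otherwise, the constraint $\alpha_J(g)=0$ forces $\alpha_{J'}(g)=\epsilon\,\alpha(g)$ for every $g\in P_J$, where $\epsilon=1$ if $0\in J$ and $\epsilon=0$ if $0\notin J$. Substituting this for both $g$ and $h$ in the displayed formula gives $\epsilon\bigl(\alpha(h)\alpha(g)+\alpha(g)\alpha(h)\bigr)=0$ modulo $2$, which establishes $N_0([g,h])=N_1([g,h])=0$.

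The only non-mechanical step is the passage to a general $f\in[P_J,P_J]$, since $N_i$ is \emph{not} a homomorphism: the index twist $i+\alpha(h)$ in Lemma~\ref{l:ni}(i) obstructs additivity. I would remove the obstruction by restricting to $\ker\alpha$. Because $\alpha$ is a homomorphism onto the abelian group $C_2$, we have $[P_J,P_J]\leq[G(d),G(d)]\leq\ker\alpha$; and on $\ker\alpha$ the twist vanishes ($\alpha(h)=0$), so Lemma~\ref{l:ni}(i) reads $N_i(gh)=N_i(g)+N_i(h)$ and $N_i$ is a genuine homomorphism there. Writing $f$ as a product of commutators $[g_k,h_k]$ with $g_k,h_k\in P_J$---each lying in $\ker\alpha$ and annihilated by $N_i$ by the previous step---additivity on $\ker\alpha$ yields $N_i(f)=\sum_k N_i([g_k,h_k])=0$, completing the argument.
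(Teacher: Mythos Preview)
Your proof is correct and follows essentially the same route as the paper's: both reduce $N_i([g,h])$ via Lemma~\ref{l:ni}(iii) to $\alpha_0(h)\,\alpha_{J'}(g)+\alpha_0(g)\,\alpha_{J'}(h)$, then use the $P_J$-constraint $\alpha_{J'}(g)=I_0\,\alpha_0(g)$ (your $\epsilon$ is the paper's $I_0$) to cancel the two terms. For the extension to arbitrary $f\in[P_J,P_J]$, the paper simply invokes Lemma~\ref{l:ni}(i) together with the fact that \emph{both} $N_0$ and $N_1$ vanish on each commutator factor (so the index twist is harmless regardless of its value), whereas you instead observe that commutators lie in $\ker\alpha_0$, where the twist disappears and $N_i$ is genuinely additive; these are two equally valid ways of reading the same inductive use of Lemma~\ref{l:ni}(i).
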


\begin{proof}
Let $i\in\{0,1\}$. Since $N_i(h) + N_{i+\act_0(g)}(h) = 0$, when $\act_0(g) = 0$, and $N_i(h) + N_{i+\act_0(g)}(h) = \act_{J'}(h)$, when $\act_0(g) = 1$, we have
\[
 N_i(h) + N_{i+\act_0(g)}(h) = \act_0(g) \act_{J'}(h).
\]
On the other hand, for $h \in P_J$,
\[
 \act_{J'}(h) + I_0 \act_0(h) = 0,
\]
where $I_0$ is the indicator of $0$ being in $J$ (equal to 0 or 1, when $0 \not\in J$ or $0\in J$, respectively). Therefore
\[
 N_i(h) + N_{i+\act_0(g)}(h) = I_0 \act_0(h)\act_0(g),
\]
and by symmetry
\[
 N_i([g,h]) = I_0 \act_0(g)\act_0(h) + I_0 \act_0(h)\act_0(g) = 0.
\]

Since the elements of the commutator subgroup $[P_J,P_J]$ are products of commutators and $N_0([g,h])=N_1([g,h])=0$, for all elements $g,h \in P_J$, Lemma~\ref{l:ni}(i) implies that $N_0(f)=N_1(f)=0$, for every element $f \in [P_J,P_J]$.
\end{proof}

\begin{proof}[Proof of Proposition~\ref{p:no-adad}]
Since $d-1 \in J'$, we have $N_0([a_0,a_{d-1}]) = 1$. Therefore, by Lemma~\ref{l:eveneven}, the element $[a_0,a_{d-1}]$ cannot be in the commutator of $P_J$.
\end{proof}

\subsection{Proof of Theorem~\ref{t:main}}\label{s:main-proof}

The proof of Theorem~\ref{t:main} uses the following sufficient condition.

\begin{theorem}[Bondarenko and Samoilovych~\cite{bondarenko-s:size3}]\label{t:bs-condition}
Let $G_P$ be a finitely constrained group of binary tree automorphisms defined by an essential pattern group $P$ of pattern size $d$, $d \geq 2$. If $[P,P]$ does not contain $P_{d-1}$ then $G_P$ is not topologically finitely generated.
\end{theorem}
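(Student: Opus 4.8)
The plan is to show that $G_P$ admits an \emph{infinite abelian} profinite group as a continuous quotient, which is impossible for a topologically finitely generated profinite group. Since $G_P$ is a closed subgroup of the profinite (pro-$2$) group $\Aut(X^*)$, and since a continuous homomorphic image of a topologically finitely generated profinite group is again topologically finitely generated, while a topologically finitely generated abelian profinite group is a finitely generated (hence Noetherian) $\hat{\mathbb{Z}}$-module, so that \emph{all} of its closed subgroups are again finitely generated, it suffices to produce a continuous homomorphism $\Psi\colon G_P\to M$ into an abelian profinite $2$-group $M$ whose image contains a closed subgroup isomorphic to an infinite product $\prod_{k\geq 0}C$ of a fixed nontrivial finite cyclic $2$-group $C$. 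Such a product surjects onto the infinite elementary abelian group $\prod_{k\geq 0}C/2C$, so it needs infinitely many topological generators, and we reach a contradiction.

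\textbf{Extracting a character.} First I would feed the hypothesis into a single character. Because $P_{d-1}$ is not contained in $[P,P]$, its image in the finite abelian group $P/[P,P]$ is nontrivial, so there is a homomorphism $\chi\colon P\to A$ into a finite cyclic $2$-group $A$ and an element $p_0\in P_{d-1}$ with $\chi(p_0)\neq 0$. I want to stress that one should \emph{not} expect $\chi$ to be $C_2$-valued: it can happen that $P_{d-1}\subseteq [P,P]\,P^2$ even though $P_{d-1}\not\subseteq[P,P]$ (for example when $P\cong C_4$ inside $G(2)$), and then no homomorphism to $C_2$ detects $P_{d-1}$, forcing $\chi(p_0)$ to have order larger than $2$. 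This is exactly why the conclusion must be phrased through an infinite product of the cyclic group $C=\langle\chi(p_0)\rangle$ rather than through a naive count of independent $C_2$-quotients.

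\textbf{Activity homomorphisms and localized elements.} Next I would form, for every level $n$, the map
\[
 \Phi_n(g)=\sum_{u\in X^n}\chi\big(g_u \bmod (G_P)_d\big),
\]
where $g_u\bmod (G_P)_d$ is the size-$d$ pattern of $g$ at $u$, regarded as an element of $P$. Using the chain rule $(gh)_u=g_{h(u)}h_u$, the fact that $\chi$ takes values in an abelian group, and the fact that $h$ permutes $X^n$, one verifies at once that each $\Phi_n\colon G_P\to A$ is a continuous homomorphism; this is the natural generalization of the activity maps $\act_J$. The decisive pairing comes from localized elements: for each $k$ I would fix a vertex $w_k\in X^{kd}$ and, using essentiality of $P$, produce $g_k\in G_P$ whose size-$d$ pattern at $w_k$ equals $p_0$, which is trivial at every other vertex of level $kd$, and which is trivial below level $kd+d-1$ (here it is essential that $p_0\in P_{d-1}$, so its only activity sits at the bottom of its window). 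Since the windows $[jd,jd+d-1]$ are pairwise disjoint, this yields $\Phi_{jd}(g_k)=\chi(p_0)\,\delta_{jk}$.

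\textbf{Conclusion and the main obstacle.} Finally, assembling $\Psi=(\Phi_{kd})_{k\geq 0}\colon G_P\to\prod_{k\geq 0}A$, the image is closed (as $G_P$ is compact) and contains, for each $k$, the tuple $\Psi(g_k)$ that equals $\chi(p_0)$ in coordinate $k$ and vanishes elsewhere; hence it contains the closed subgroup $\prod_{k\geq 0}\langle\chi(p_0)\rangle$, an infinite product of the nontrivial cyclic group $C=\langle\chi(p_0)\rangle$. By the reduction of the first paragraph, $G_P$ cannot be topologically finitely generated. The step I expect to require the most care is the construction of the localized elements $g_k$: one must check that placing a single copy of $p_0$ in one size-$d$ window and nothing else produces an automorphism \emph{all} of whose patterns lie in $P$, which is precisely where essentiality (extendability of the sections of patterns in $P$) is used. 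The only genuinely conceptual subtlety is the one flagged above, namely that $P_{d-1}$ may be detected only by characters of order greater than $2$; it is exactly this phenomenon that forces the argument to pass through an infinite product of a cyclic group together with the Noetherian property of finitely generated abelian profinite groups, rather than through the Frattini quotient of $G_P$ directly.
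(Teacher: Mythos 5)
Your overall architecture is sound: the maps $\Phi_n(g)=\sum_{u\in X^n}\chi\bigl(g_u \bmod (G_P)_d\bigr)$ are indeed continuous homomorphisms (chain rule plus abelian target), and your profinite reduction (a topologically finitely generated abelian profinite group is a finitely generated $\hat{\mathbb{Z}}$-module, so all of its closed subgroups are finitely generated) is correct; note for calibration that the paper does not prove this theorem at all but quotes it from Bondarenko--Samoilovych, so you are being measured against the literature rather than an in-paper argument. However, the step you yourself flagged as requiring ``the most care'' --- the existence of the localized elements $g_k$ --- is a genuine gap, and essentiality does not close it. Essentiality guarantees that each section of a pattern $p\in P$ extends to \emph{some} element of $P$; it does not guarantee that it extends to the element with trivial labels below, which is exactly what your $g_k$ needs at the vertices $w_kv$ with $0<|v|\leq d-1$, whose size-$d$ windows overlap the bottom row of $p_0$ and are filled in trivially.

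Ironically, the example you cite for the order-$4$ phenomenon kills your construction. Take $d=2$ and, writing a size-$2$ pattern as (root label; labels at $0$ and $1$), let $P=\langle (1;1,0)\rangle=\{(0;0,0),(1;1,0),(0;1,1),(1;0,1)\}\cong C_4$. This $P$ is essential, $[P,P]=\{e\}$, and $P_1=\{e,p_0\}$ with $p_0=(0;1,1)\notin[P,P]$, so it is within the scope of the theorem. Your $g_k$ has label $1$ at $w_k0$ and trivial labels at $w_k00$ and $w_k01$, so its pattern at $w_k0$ is $(1;0,0)=a_0\notin P$; hence $g_k\notin G_P$. Worse, in this $G_P$ every active vertex has pattern $(1;1,0)$ or $(1;0,1)$ and therefore exactly one active child, so activity propagates to \emph{every} level below an active vertex: no nontrivial element of $G_P$ is ``trivial below level $kd+d-1$,'' and your localization requirement is unachievable outright, not merely unproven. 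What can be salvaged is a weaker, triangular input: it would suffice to find, for infinitely many $n$, an element $g_n\in (G_P)_{n+d-1}$ (all labels trivial above level $n+d-1$, which already forces $\Phi_j(g_n)=0$ for all $j<n$) with $\chi(\Phi_n(g_n))\neq 0$, leaving the tail below uncontrolled; such elements do exist in the $C_4$ example (thin odometer-like tails), and they suffice because a topologically finitely generated abelian profinite group of bounded exponent is finite, while a triangular family gives infinitely many distinct elements in the image of $(\Phi_n)_n$. But producing such $g_n$ for an arbitrary essential pattern group is precisely the nontrivial content of the Bondarenko--Samoilovych theorem, and your proposal supplies no argument for it.
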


\begin{proof}[Proof of Theorem~\ref{t:main}]
By Theorem~\ref{t:main-equiv}, $P$ is a maximal subgroup of $G(d)$ that does not contain $a_{d-1}$ and contains the commutator subgroup of $G(d)$. In particular $P$ contains $[a_0,a_{d-1}]$. On the other hand, $[a_0,a_{d-1}]$ stabilizes level $d-1$, which means that $[a_0,a_{d-1}] \in P_{d-1}$. By Proposition~\ref{p:no-adad}, $[a_0,a_{d-1}] \not\in [P,P]$. Therefore, by Theorem~\ref{t:bs-condition}, $G_P$ is not topologically finitely generated.
\end{proof}


\newcommand{\etalchar}[1]{$^{#1}$}
\def\cprime{$'$}


\end{document}